\documentclass[12pt]{amsart}
\usepackage{amsfonts,amscd,latexsym,amsmath,amssymb,cancel,enumerate}
\theoremstyle{plain}
\newtheorem{master}{Master}[section]
\newtheorem{prop}[master]{Proposition}
\newtheorem{thm}[master]{Theorem}
\newtheorem{fact}[master]{Fact}
\newtheorem{lem}[master]{Lemma}
\newtheorem{cor}[master]{Corollary}

\newtheorem{claim}[master]{Claim}
\theoremstyle{definition}
\newtheorem{defin}[master]{Definition}
\newtheorem{observation}[master]{Observation}
\theoremstyle{remark}

\numberwithin{equation}{section}
\begin{document}
\title{$F_\sigma$ equivalence relations and Laver forcing}
\author{Michal Doucha}
\address{Institute of Mathematics, Academy of Sciences, Prague, Czech republic}
\email{m.doucha@post.cz}
\thanks{The research of the author was partially supported by NSF grant DMS 0801114 and grant IAA100190902 of Grant Agency of the Academy of Sciences
of the Czech Republic}
\keywords{Borel equivalence relations, Laver ideal, Canonical Ramsey theorem.}
\subjclass[2000]{03E15, 03E05.}
\begin{abstract}
Following the topic of the book Canonical Ramsey Theory on Polish Spaces by V. Kanovei, M. Sabok and J. Zapletal we study Borel equivalences on Laver trees. Here we prove that equivalence relations Borel reducible to an equivalence relation on $2^\omega$ given by some $F_\sigma$ $P$-ideal on $\omega$ can be canonized to the full equivalence relation or to the identity relation.

This has several corollaries, e.g. Silver type dichotomy for the Laver ideal and equivalences Borel reducible to equivalence relations given by $F_\sigma$ $P$-ideals.
\end{abstract}
\maketitle
\section*{Introduction}
The aim of this paper is to prove canonization results for the Laver ideal in the spirit of \cite{KSZ}.

Let us recall that a Borel equivalence relation $E$ on a standard Borel space $X$ is an equivalence relation which is a Borel subset of $X\times X$. All equivalence relations in this text will be assumed to be Borel. We say that an equivalence relation $E$ on $X$ is Borel reducible to an equivalence $F$ on $Y$, $E\leq _B F$, if there exists a Borel function $f:X\rightarrow Y$ such that $xEy\equiv f(x)Ff(y)$. They are bireducible, $E\approx_B F$, if $E\leq _B F$ and $F\leq _B E$. For a Borel subset $A\subseteq X$, $E\upharpoonright A$ is the Borel equivalence relation $E\cap A\times A$, the restriction of $E$ on $A$.

For a Borel ideal $\mathcal{I}$ on $\omega$ we denote $E_\mathcal{I}$ the Borel equivalence relation on $2^\omega$ where $xE_\mathcal{I}y\equiv \{n:x(n)\neq y(n)\}\in \mathcal{I}$. Obviously, if $\mathcal{I}$ is $\Sigma^0_\alpha$ ($\Pi^0_\alpha$), then also $E_\mathcal{I}$ is $\Sigma^0_\alpha$ ($\Pi^0_\alpha$).

An ideal $\mathcal{I}$ on $\omega$ is called $P$-ideal if $\forall (A_n)_{n\in \omega}\subseteq \mathcal{I}\exists A\in \mathcal{I}\forall n (A_n\subseteq_* A)$.

Let us recall that a Laver tree $T\subseteq \omega^{<\omega}$ is a tree with stem $s$, the maximal node such that every other node is compatible with it, such that every node above $s$ (and including $s$) splits into infinitely many immediate successors. The set of all branches of $T$ is denoted as $[T]$. 

We can now state the main result of this paper.
\begin{thm}
Let $T$ be a Laver tree, $\mathcal{I}$ an $F_\sigma$ $P$-ideal on $\omega$ and $E\subseteq [T]\times [T]$ be an equivalence relation Borel reducible to $E_\mathcal{I}$. Then there is a Laver subtree $S\leq T$ such that $E\upharpoonright [S]$ is either $\mathrm{id}([S])$ or $[S]\times [S]$.
\end{thm}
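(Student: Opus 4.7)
My plan follows the canonization scheme for Laver forcing developed in the monograph cited in the introduction: use the structural representation of $\mathcal{I}$ to set up a coloring of pairs of branches through $T$, run a fusion that stabilizes this coloring on a Laver subtree, and read off the dichotomy. By Mazur's theorem write $\mathcal{I} = \mathrm{Fin}(\phi)$ for a lower semicontinuous submeasure $\phi$, and, exploiting the $P$-ideal assumption, by Solecki's theorem write $\mathcal{I} = \mathrm{Exh}(\phi')$ for another lsc submeasure $\phi'$. The first representation recognizes $\mathcal{I}$-membership via uniform boundedness of $\phi$-mass on finite initial segments; the second recognizes it via vanishing $\phi'$-mass on tails. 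Both will be used.

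A standard continuous-reading-of-names fusion for Laver forcing first shrinks $T$ to a Laver subtree on which the Borel reduction $f:[T]\to 2^\omega$ is continuous, so that each coordinate $f(x)(k)$ is decided by a suitable initial segment of $x$. On this subtree I build a fusion sequence $T\geq T_0\geq T_1\geq\cdots$ handling splitting nodes one at a time along an enumeration. At a splitting node $t$ with infinitely many immediate successors $t{}^\frown k$ in the current tree, I extend a typical branch through each $t{}^\frown k$ far enough to estimate, for every pair $k<\ell$, a bounded approximation of the $\phi$-distance between the resulting partial $f$-values. By the infinite pigeonhole, I thin the successors of $t$ to an infinite set on which this coloring stabilizes, yielding a local dichotomy: either (a) the $\phi$-distance between any two branches through distinct stabilized successors of $t$ stays uniformly bounded, or (b) it can be made arbitrarily large by suitable further extensions of the typical branches.

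Iterating over all splitting nodes and applying a final pigeonhole to decide which of (a) or (b) occurs at each one, I obtain a Laver subtree $S$ on which a single alternative has been chosen throughout. In the "always (a)" case, the uniform bounds at every splitting level combine with the $\mathrm{Fin}$-representation to give $\phi$-finiteness of the symmetric differences of $f$-images across $[S]$, so $E\restriction[S]=[S]\times[S]$. In the "always (b)" case, continuous reading together with diagonalization across levels pushes the local large-$\phi$-mass contributions onto arbitrarily late coordinates, and the $\mathrm{Exh}$-representation of the $P$-ideal then certifies $\mathcal{I}$-positivity of the symmetric difference of any two distinct branches, yielding $E\restriction[S]=\mathrm{id}([S])$. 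The principal obstacle is exactly this passage from local, finite pigeonhole data at each splitting node to a global $\mathcal{I}$-theoretic verdict on whole branches; the conjunction of the $F_\sigma$ and $P$-ideal hypotheses is indispensable precisely because the former reads $\mathcal{I}$-membership from bounded partial $\phi$-mass while the latter allows $\mathcal{I}$-positivity to be assembled from mass stacked on a tail, and making the Laver fusion respect both controls simultaneously is the technical crux of the argument.
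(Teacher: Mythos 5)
Your overall strategy — continuous reading of names on a Laver subtree, fusion through splitting nodes with a pigeonhole-style stabilization, then read off a dichotomy — is in the right spirit and broadly matches the paper's. But the conclusion you draw in your ``always (a)'' case contains a genuine gap. You claim that uniform bounds on the local $\phi$-distance at each splitting level ``combine with the $\mathrm{Fin}$-representation to give $\phi$-finiteness of the symmetric differences,'' hence the full relation. This does not follow: each level can contribute a uniformly \emph{bounded} but non-vanishing amount of $\phi$-mass, and over infinitely many levels these contributions can sum to infinity (take the summable ideal with each split of a branch pair contributing mass roughly $1$). So the ``convergent'' case can still yield the identity, not just the full relation. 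The paper resolves exactly this point: after its Case 1 fusion (arranging $f(x)\bigtriangleup x_s$ to decompose into disjoint pieces $C_{u_i}$ on intervals $(m_{i-1},m_i)$ with controlled error), it applies the Laver dichotomy of Corollary~\ref{laverfact} once more to the Borel set $A=\{x: \mu(\bigcup_i C_{x\upharpoonright i})<\infty\}$, and whether the resulting subtree lies inside or outside $A$ is what determines full vs.\ identity. Without this extra step your case (a) is simply not correct.

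There are two further mismatches worth noting. First, your ``always (b)'' case does not directly produce identity; the paper's Case 2 fusion only achieves $d(f(x),f(y))>1$ for distinct branches, from which it deduces that $E$ is \emph{countable} on the subtree, and then invokes the separate Laver canonization result for countable equivalence relations (Corollary~\ref{ctble}) to finish. Second, the paper's case split is not a per-node pigeonhole followed by a global pigeonhole: it is the dichotomy that either some subtree has \emph{all} nodes convergent, or else every subtree has a barrier of divergent nodes above every node. The second alternative tolerates convergent nodes between barriers, and the key lemma's proof handles both sub-cases (stem convergent vs.\ stem divergent) with distinct fusions. Your sketch compresses this structure in a way that loses the argument; the interplay between the barrier structure and the triangle-inequality bookkeeping on the $d_{m_{n-1}}^{m_n}$ quantities is precisely where the $F_\sigma$ and $P$-ideal hypotheses earn their keep, and it cannot be replaced by a straight two-way pigeonhole.
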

We note that the subtree $S$ in general cannot be found as a direct extension of $T$.

The list of equivalence relations Borel bireducible with $E_\mathcal{I}$ for $\mathcal{I}$ an $F_\sigma$ $P$-ideal includes for instance $E_{\ell_p}$ equivalences for $p\in [1,\infty)$ on $\mathbb{R}^\omega$, where $xE_{\ell_p}y\equiv x-y\in \ell_p$; or $E_2$ (which is in fact bireducible with $E_{\ell_1}$) on $2^\omega$, where $xE_2y\equiv \sum \{1/(n+1):x(n)\neq y(n)\}<\infty$.
\section{Preliminaries}
To get into the context, we now present the general program of \cite{KSZ}: let $X$ be a Polish space, $I$ a $\sigma$-ideal on $X$ and $E\subseteq X^2$ a Borel equivalence relation.
\begin{itemize}
\item We say that $E$ is in the spectrum of $I$ if there exists a Borel set $B\in I^+$ such that $\forall C\in (I^+\cap \mathrm{Borel}(B))$ $E\upharpoonright C$ has the same complexity as $E$ on the whole space, i.e. $E\upharpoonright C$ is Borel bireducible with $E\upharpoonright X$.
\item On the other hand, $I$ canonizes $E$ to a relation $F\leq _\mathrm{B} E$ if for every Borel $B\in I^+$ there is some Borel $C\in (I^+\cap \mathrm{Borel}(B))$ such that $E\upharpoonright C$ is bireducible with $F$.
\end{itemize}   

Before proving the main theorem we state existing knowledge about the spectrum of Laver ideal and some results about $F_\sigma$ $P$-ideals and Laver ideal that we will need in the proof of the theorem.

The following theorem connects $F_\sigma$ $P$-ideals with lower semicontinuous submeasures. Recall that a submeasure $\mu: \mathcal{P}(\omega)\rightarrow [0,\infty]$ is lower semicontinuous if it is lower semicontinuous in the Cantor space topology on $\mathcal{P}(\omega)$ (identified with $2^\omega$).
\begin{thm}[Solecki \cite{So}]\label{Sol}
Let $\mathcal{I}$ be an $F_\sigma$ $P$-ideal on $\omega$. Then there exists a lower semicontinuous submeasure $\mu :\mathcal{P}(\omega)\rightarrow [0,\infty]$ such that $\mathcal{I}=\mathrm{Exh}(\mu)=\mathrm{Fin}(\mu)$, where $\mathrm{Exh}(\mu)=\{A\subseteq \omega:\lim _{n\to \infty} \mu(A\setminus n)=0\}$ and $\mathrm{Fin}(\mu)=\{A\subseteq \omega:\mu(A)<\infty\}$.
\end{thm}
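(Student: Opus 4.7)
The proof proceeds in two stages. First, I would realize $\mathcal{I}$ as $\mathrm{Fin}(\phi)$ for some lower semicontinuous submeasure $\phi$, a general representation for $F_\sigma$ ideals essentially due to Mazur. Second, I would use the $P$-ideal hypothesis to upgrade $\phi$ into a submeasure $\mu$ for which $\mathrm{Fin}(\mu)$ and $\mathrm{Exh}(\mu)$ coincide, with common value $\mathcal{I}$.

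For the first stage, expand $\mathcal{I} = \bigcup_n K_n$ as a union of closed subsets of $2^\omega$. After replacing each $K_n$ by the topological closure of its downward- and finite-union-closure, which still sits inside $\mathcal{I}$ because $\mathcal{I}$ is itself hereditary and closed under finite unions, each $K_n$ may be assumed closed, hereditary, and stable under finite unions. Then
\[
\phi(A) = \inf\{n : A \in K_n\},
\]
with the convention $\inf\emptyset = \infty$, defines an lsc submeasure with $\mathrm{Fin}(\phi) = \mathcal{I}$: monotonicity and lower semicontinuity are immediate from heredity and closedness, and subadditivity follows from the finite-union property after a harmless reindexing (for instance, $K_n \cup K_m \subseteq K_{n+m}$).

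For the second stage, the key is to select a partition $\omega = \bigsqcup_k I_k$ into consecutive finite intervals such that, for every $A \in \mathcal{I}$, the values $\phi(A \cap I_k)$ decay appropriately as $k \to \infty$. The $P$-ideal axiom enters precisely here: given a countable, sufficiently dense family in $\mathcal{I}$ (readily extracted from the $K_n$), it produces a single $B^{\ast} \in \mathcal{I}$ almost containing all its members, and the partition can then be constructed inductively so that $\phi(B^{\ast} \cap I_k) \to 0$. Using this partition, one assembles $\mu$ by combining $\phi$ with a tail-sensitive series along the partition; the precise hybrid is arranged so that (a) lower semicontinuity is preserved (as a supremum or sum of lsc functions of $A$), (b) $\mu(A) = \infty$ whenever $A \notin \mathcal{I}$, and (c) $\mu(A \setminus n) \to 0$ for every $A \in \mathcal{I}$, the latter being automatic from vanishing tails of a convergent series.

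The main obstacle is the second stage. Calibrating the formula for $\mu$ together with the intervals $I_k$ so that $\mathrm{Fin}(\mu)$ and $\mathrm{Exh}(\mu)$ both collapse to $\mathcal{I}$ is the crux, and this is exactly where the $P$-ideal assumption does nontrivial work: without it one can have $\mathrm{Exh}(\phi) \subsetneq \mathrm{Fin}(\phi)$ with no remedy, since no single lsc submeasure will simultaneously witness finiteness and exhaustion on $\mathcal{I}$. Showing that a partition built against a guiding countable family actually controls every member of $\mathcal{I}$ simultaneously, and that the resulting $\mu$ does not inadvertently make some $A \notin \mathcal{I}$ finite, is the delicate verification on which the theorem rests.
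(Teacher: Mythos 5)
The paper does not prove this statement: it is imported verbatim from Solecki \cite{So}, so there is no internal proof to compare against, and your proposal has to stand on its own. Your first stage is essentially Mazur's representation theorem for $F_\sigma$ ideals and is fine in outline, with one correction: you cannot take the topological closure of the full finite-union closure of $K_n$ and expect to stay inside $\mathcal{I}$ (if $K_n$ contains all singletons, its finite unions are dense in $2^\omega$). The standard fix is to let the $n$-th set consist of unions of at most $n$ members of $K_0\cup\dots\cup K_n$, downward closed; these are continuous images of compact sets, hence already closed, no further closure is needed, and the reindexing $K_n\cup K_m\subseteq K_{n+m}$ you invoke for subadditivity then comes for free. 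Note also that this half uses no $P$-ideal hypothesis whatsoever.

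The genuine gap is the second stage, which you yourself flag as ``the crux'': as written it describes what must be proved rather than proving it, and the mechanism you propose appears unworkable. A single $B^*\in\mathcal{I}$ obtained by pseudo-unioning a countable family cannot control all of $\mathcal{I}$: if some countable family were $\subseteq_*$-cofinal in $\mathcal{I}$, then by one more application of the $P$-property $\mathcal{I}$ would equal $\{B: B\subseteq_* A\}$ for a single $A$, which fails for every $F_\sigma$ $P$-ideal of interest (e.g.\ the summable ideal). Hence arranging $\phi(B^*\cap I_k)\to 0$ gives no information about $\phi(A\cap I_k)$ for a general $A\in\mathcal{I}$, and no ``sufficiently dense'' countable family extracted from the $K_n$ can repair this, since topological density is irrelevant to the values of $\phi$ on tails. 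The $P$-ideal hypothesis has to enter through uniform estimates instead; for instance, it does yield that $\sup_{A\in\mathcal{I}}\lim_n\phi(A\setminus n)<\infty$ (otherwise, choosing $A_k\in\mathcal{I}$ with $\lim_n\phi(A_k\setminus n)\geq k$ and a pseudo-union $A\supseteq_* A_k$ would force $\phi(A)=\infty$), and Solecki's construction of $\mu$ from $\phi$ is built on estimates of this kind rather than on a guiding countable family. As it stands, your plan establishes only the Mazur half $\mathcal{I}=\mathrm{Fin}(\phi)$ and leaves the actual content of the theorem --- the simultaneous identity $\mathrm{Fin}(\mu)=\mathrm{Exh}(\mu)=\mathcal{I}$ --- unproved.
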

We add some notation concerning Laver trees and Laver ideal. We say that a Laver tree $S$ is a direct extension of a Laver tree $T$, $S\leq_0 T$ in symbols, if the stem of $S$ is the same as the stem of $T$. If $s\in T$ is a node above the stem then by $T_s$ we denote the induced subtree with $s$ as the stem, i.e. $T_s=\{t\in T: t\text{ is compatible with }s\}$.

We use the definition of Laver ideal $I$ from \cite[p.200]{Zap}; $I\subseteq \mathcal{P}(\omega^\omega)$ is the $\sigma$-ideal generated by sets $A_g=\{f\in \omega^\omega:\exists^\infty n (f(n)\in g(f\upharpoonright n))\}$, where $g$ is a function from $\omega^{<\omega}$ to $\omega$.

The following proposition, resp. its corollary, will be used extensively. The proof is in \cite{Zap}.
\begin{prop}\label{laverfactmain}
Let $A\subseteq \omega^\omega$ be analytic. Then either $A$ contains all branches of some Laver tree or $A\in I$.
\end{prop}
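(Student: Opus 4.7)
The plan is to establish the dichotomy by a transfinite derivation on a Suslin-tree representation of $A$. Write $A = p[S]$ where $S \subseteq (\omega\times\omega)^{<\omega}$ is a tree, so that $A$ is the first-coordinate projection of the closed set $[S]\subseteq\omega^\omega\times\omega^\omega$. Define a derivation by $S^{(0)}=S$,
\[
S^{(\alpha+1)}=\bigl\{(t,u)\in S^{(\alpha)} : \{n:\exists m\; (t^\frown n,u^\frown m)\in S^{(\alpha)}\}\text{ is infinite}\bigr\},
\]
with intersections at limit stages. Since $S$ is countable, the procedure stabilizes at some $\alpha^*<\omega_1$ with $S^{(\alpha^*)}=S^{(\alpha^*+1)}$.

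If $S^{(\alpha^*)}\neq\emptyset$, I would fix any $(t_0,u_0)\in S^{(\alpha^*)}$ and build a subtree $\tilde T\subseteq S^{(\alpha^*)}$ with stem $(t_0,u_0)$ by recursion: at every node $(t,u)$ already in $\tilde T$, the fixed-point identity provides an infinite set of first-coordinate successors, each with a witness $m$, all of which are thrown into $\tilde T$. The first-coordinate projection of $\tilde T$ is then a Laver tree with stem $t_0$ whose branches lift to branches of $\tilde T\subseteq[S]$ and hence project to elements of $A$, producing the required Laver subtree.

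If instead $S^{(\alpha^*)}=\emptyset$, every $(t,u)\in S$ has a rank $\beta(t,u)<\alpha^*$ (the least $\alpha$ with $(t,u)\notin S^{(\alpha+1)}$) and a finite exceptional set $E(t,u)=\{n:\exists m\;(t^\frown n,u^\frown m)\in S^{(\beta(t,u))}\}$. Along any $(f,y)\in[S]$, whenever $f(n)\notin E(f\upharpoonright n,y\upharpoonright n)$ the rank $\beta$ strictly descends from step $n$ to step $n+1$; by well-foundedness this happens only finitely often, so $f(n)\in E(f\upharpoonright n,y\upharpoonright n)$ for cofinitely many $n$.

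The main obstacle is that $E(t,u)$ depends on the witness $u$, so the naive definition $g(t)=\max E(t,u)+1$ is not well-defined on $\omega^{<\omega}$. I would resolve this by stratifying $A$ via the ranks: for each countable ordinal $\alpha\leq\alpha^*$ consider the subtree $S_\alpha=\{(t,u)\in S:\beta(t,u)\leq\alpha\}$ and the associated projection $A_\alpha=\pi_1[S_\alpha]$, noting that each $f\in A$ lies in some $A_\alpha$ (witnessed by a $y$ whose rank sequence has supremum $\leq\alpha$, which is always countable since it is bounded by $\alpha^*$). On $S_\alpha$ the ranks are uniformly bounded, which permits a transfinite induction on $\alpha$ to produce at each stage a countable family of functions $g_k^\alpha:\omega^{<\omega}\to\omega$ whose sets $A_{g_k^\alpha}$ cover $A_\alpha$; the base case uses that rank-$0$ nodes have only finitely many first-coordinate successors in $S$, and the inductive step folds together the countable families obtained at smaller ranks with a new one controlling the level-$\alpha$ exceptional data. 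Gathering the $g_k^\alpha$ over the at most countably many values of $\alpha$ gives the required countable cover of $A$, witnessing $A\in I$.
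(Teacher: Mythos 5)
The paper does not actually prove this proposition (it is quoted from \cite{Zap}), so your argument has to stand on its own. Your Case 1 (the derivative stabilizes at a nonempty set, from which a Laver tree inside $A$ is extracted) is fine. The problem is in Case 2, and it is a genuine error rather than a missing detail. Your derivative does not preserve the tree property: a node $(t,u)$ can fail the ``infinitely many first-coordinate successors'' test while one of its children passes it (e.g.\ $(t,u)$ has a unique successor pair which itself splits infinitely in the first coordinate). Consequently the rank $\beta$ is \emph{not} non-increasing along branches of $S$, and the inference ``the rank strictly descends at every step with $f(n)\notin E(f\upharpoonright n,y\upharpoonright n)$, hence by well-foundedness this happens only finitely often'' is invalid: a sequence of ordinals can descend strictly infinitely often provided it climbs back up at the intervening steps. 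One can in fact build a tree $S$ with a branch $(f,y)$ whose rank sequence alternates $2,1,2,1,\dots$ and for which $E(f\upharpoonright 2k,y\upharpoonright 2k)=\emptyset$ for every $k$, so your conclusion ``$f(n)\in E(f\upharpoonright n,y\upharpoonright n)$ for cofinitely many $n$'' is simply false. What well-foundedness genuinely yields is only that there are infinitely many \emph{good} steps (cofinitely many bad steps would give an infinite strictly descending sequence of ordinals), which is a weaker statement you would then have to work with.

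Even granting infinitely many good steps, the issue you yourself flag as ``the main obstacle'' --- replacing the witness-dependent finite sets $E(t,u)$ by countably many functions $g:\omega^{<\omega}\to\omega$ with $A\subseteq\bigcup_k A_{g_k}$ --- is where essentially all of the difficulty of the theorem lives, and your sketch does not resolve it. The stratification $S_\alpha=\{(t,u):\beta(t,u)\le\alpha\}$ is not a tree (again because the rank is not monotone), so $A_\alpha=\pi_1[S_\alpha]$ is not the projection of a tree in any usable sense, and the observation that every $f\in A$ lies in some $A_\alpha$ amounts to nothing more than $A=A_{\alpha^*}$. The inductive step (``folds together the countable families \dots with a new one controlling the level-$\alpha$ exceptional data'') is precisely the assertion that needs proof: for a fixed $t$ there may be infinitely many $u$ with $(t,u)\in S$ and $\sup_u\max E(t,u)=\infty$, and nothing in your construction guarantees that along a branch the witness $y\upharpoonright n$ remains inside a fixed countable slice of the witnesses of $f\upharpoonright n$, which is what would be needed to extract a single $g_k$ working for infinitely many $n$. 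This is exactly the point at which the proof in \cite{Zap} does real work; as written, Case 2 does not establish $A\in I$.
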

We will provide a proof of the following corollary. Recall that a barrier $B$ in a Laver tree $T$ is a subset of nodes such that $\forall x\in [T]\exists n (x\upharpoonright n\in B)$.
\begin{cor}\label{laverfact}
Let $T$ be a Laver tree and let $A\subseteq [T]$ be analytic. Then there exists a direct extension $S\leq_0 T$ such that either $[S]\subseteq A$ or $[S]\cap A=\emptyset$.
\end{cor}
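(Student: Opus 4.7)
I would apply Proposition~\ref{laverfactmain} to the analytic set $A\cap[T]$, yielding two alternatives.

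\textbf{Case 1:} $A\cap[T]\in I$. Then $A\cap[T]\subseteq\bigcup_{k\in\omega}A_{g_k}$ for suitable $g_k:\omega^{<\omega}\to\omega$. I would construct a Laver fusion $T=T_0\geq_0 T_1\geq_0\cdots$ in which $T_{k+1}$ agrees with $T_k$ on every node of length at most $|s|+k$ (with $s$ the stem of $T$) and, at each deeper node $t$, omits the finitely many immediate successors $t^\frown m$ with $m<g_k(t)$. Then $S:=\bigcap_k T_k$ is a direct extension of $T$; every branch $f\in[S]$ satisfies $f(n)\geq g_k(f\!\upharpoonright\!n)$ for all sufficiently large $n$, so $f\notin A_{g_k}$ for each $k$, and hence $[S]\cap A=\emptyset$.

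\textbf{Case 2:} $A\cap[T]\supseteq[T']$ for some Laver tree $T'\subseteq T$; note $T'$ need not be a direct extension of $T$. To convert this into a direct extension I would apply Proposition~\ref{laverfactmain} once more, now to the Borel (hence analytic) set $[T]\setminus[T']$.

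\emph{Case 2a:} $[T]\setminus[T']\in I$. The Case~1 fusion, applied to $[T]\setminus[T']$ in place of $A$, yields $S\leq_0 T$ with $[S]\cap([T]\setminus[T'])=\emptyset$, i.e.\ $[S]\subseteq[T']\subseteq A$.

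\emph{Case 2b:} $[T]\setminus[T']$ contains $[T'']$ for another Laver tree $T''\subseteq T$ with $[T'']\cap[T']=\emptyset$. I would then iterate, applying Proposition~\ref{laverfactmain} to $A\cap[T'']$ and to further residuals, thereby peeling off a sequence of pairwise disjoint Laver subtrees of $T$.

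The main obstacle I expect is Case~2b: its iteration need not terminate after finitely many steps. My plan is to organize the recursion as a Laver fusion with bookkeeping, diagonally absorbing the countable family of Laver subtrees extracted along the way. The $\sigma$-ideal property of $I$ then forces the accumulated residual analytic sets either into $I$ (closing the argument via the Case~1 or Case~2a fusion) or to yield a Laver subtree with stem $s$, which can be packaged into the required direct extension $S\leq_0 T$ realizing one side of the dichotomy.
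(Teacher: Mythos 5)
Your Cases 1 and 2a are sound: if $A\cap[T]\in I$, cover it by generators $A_{g_k}$ and fuse, trimming finitely many low-value successors at each step so that every branch of $S$ eventually dominates each $g_k$ pointwise along its own history; the same fusion applied to $[T]\setminus[T']$ handles Case 2a. But Case 2b is not an obstacle you can simply "organize with bookkeeping" — it is exactly where the theorem lives, and your sketch leaves a genuine gap. Peeling off pairwise disjoint Laver subtrees $W_1,W_2,\ldots$ of $T$, each deciding $A$, gives you nothing to assemble: their stems can all sit strictly above the stem $s$ of $T$, the decisions $[W_i]\subseteq A$ vs.\ $[W_i]\cap A=\emptyset$ can alternate, and there is no reason the residual $[T]\setminus\bigcup_i[W_i]$ ever falls into $I$ (your appeal to the $\sigma$-ideal property of $I$ does not justify this). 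Even in the friendly scenario where the residual does vanish — say $W_i=T_{s_i}$ for every immediate successor $s_i$ of $s$, with $[T_{s_i}]\subseteq A$ for even $i$ and $[T_{s_i}]\cap A=\emptyset$ for odd $i$ — your recursion terminates with a countable cover but still no direct extension in hand, because you have not explained how to glue Laver subtrees with different stems and conflicting decisions into one $S\leq_0 T$.

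The missing idea, which drives the paper's argument, is a pigeonhole observation about immediate successors: for a node $t$, if infinitely many immediate successors $s$ admit a deciding direct extension $\leq_0 T_s$, then infinitely many of these decide the \emph{same} way, and the union of those deciding extensions is a single deciding direct extension of $T_t$. The paper runs a fusion dual to yours: instead of extracting deciding subtrees, it \emph{erases} at each level the (necessarily finitely many, or else the pigeonhole wins) successors $s$ for which a deciding direct extension below $s$ exists, producing $T'\leq_0 T$ in which no node strictly above the stem has a deciding direct extension below it. Proposition~\ref{laverfactmain} applied to $T'$ then produces a node $t$ and a deciding direct extension of $T'_t$; if $t$ is the stem you are done, and if $t$ is above the stem it contradicts the erasure. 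This contradiction, not the $\sigma$-ideal property, is what closes the argument, and it is precisely the mechanism absent from your Case 2b.
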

\begin{proof}
It follows from the proposition above that there is always $S\leq T$ with that property which is in general not a direct extension though. The use of "direct extension property" will give us the desired tree. Let $t$ be the stem of $T$. If there exist infinitely many immediate successors $s$ of $t$ such that there exists a direct extension $S\leq_0 T_s$ with the property above, then for infinitely many of them it holds that $[S]\subseteq A$, or for infinitely many of them it holds that $[S]\cap A=\emptyset$, and we use them.  So suppose that not, we erase these finitely many exceptions and proceed to the next level and do the same. At the end we obtain a Laver tree $T'\leq_0 T$. We apply the proposition above and get a node $t\in T'$ and a direct extension $S\leq_0 T'_t$ such that either $[S]\subseteq A$ or $[S]\cap A=\emptyset$. That is a contradicition since such a node was erased during the construction of $T'$.
\end{proof}

We now state some results that have been obtained about the spectrum of Laver in \cite{KSZ}. We will need just Corollary \ref{ctble} in our proof, however we state the general canonization result from which it follows.
\begin{thm}[\cite{KSZ}]
Let $I$ be a $\sigma$-ideal on a Polish space $X$ such that the quotient forcing $P_I$ is proper, nowhere ccc and adds a minimal forcing extension. Then $I$ has total canonization for equivalence relations classifiable by countable structures.
\end{thm}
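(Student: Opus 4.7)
The plan is to translate the three hypotheses on $P_I$—properness, nowhere ccc, and minimality of the generic extension—into structural information about $E$, and then into the canonization. Since $E$ is classifiable by countable structures, fix a Borel map $f\colon X\to \mathrm{Mod}_\omega(\mathcal{L})$ with $xEy$ iff $f(x)\cong f(y)$. Fix $B\in I^+$ Borel, force with $P_I$ below $B$, write $\dot x_{\mathrm{gen}}$ for the generic element of $X$, and let $\dot M = f(\dot x_{\mathrm{gen}})$, a countable structure in $V[\dot x_{\mathrm{gen}}]$. Apply minimality to the intermediate model $V[\dot M]$: after passing to a stronger condition we may assume that $P_I$ below $B$ forces either $\dot M\in V$, or $V[\dot M]=V[\dot x_{\mathrm{gen}}]$.

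In the first case, properness gives Borel reading of the name $\dot M$: there is a Borel $I$-positive $C\subseteq B$ on which $f$ is represented by a Borel function with ground-model values. A density refinement then produces a single $M_0\in V$ and a Borel $I$-positive $C'\subseteq C$ with $f(y)\cong M_0$ for all $y\in C'$, whence $E\upharpoonright C' = C'\times C'$.

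In the second case, $\dot x_{\mathrm{gen}}\in V[\dot M]$, and Borel reading yields a ground-model Borel function $h$ together with a Borel $I$-positive $C\subseteq B$ on which $h\circ f$ is the identity; in particular $f\upharpoonright C$ is injective. Identity canonization will follow once we further shrink $C$ to a Borel $I$-positive $C^*$ on which no two distinct points have isomorphic $f$-images. Here nowhere ccc enters: it guarantees that the product forcing $P_I\times P_I$ below $C\times C$ admits an $I\otimes I$-positive Borel set of pairs $(y_0,y_1)$ mutually $P_I$-generic over $V$. For any such pair, an isomorphism $\phi\colon f(y_0)\to f(y_1)$ would give $y_1 = h(\phi\cdot f(y_0)) \in V[y_0][\phi]$; a second application of minimality, now of $P_I$ over $V[y_0]$, would force $V[y_0][\phi]$ to equal either $V[y_0]$—contradicting mutual genericity of $y_1$ over $V[y_0]$—or the full extension $V[y_0][y_1]$. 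The main obstacle is ruling out this second possibility: one must argue that the family of possible isomorphisms between $f(y_0)$ and a variable $f(y_1)$, parametrized inside $V[y_0]$, is combinatorially too thin to carry a $P_I$-generic's worth of information, so that $\phi$ is in fact already in $V[y_0]$. Once this rigidity statement is secured, a Borel uniformization on $C\times C$ minus a Fubini-small error produces the desired $C^*\subseteq C$ with $E\upharpoonright C^* = \mathrm{id}(C^*)$, completing the dichotomous canonization.
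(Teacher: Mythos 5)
First, a point of order: the paper does not prove this theorem; it is quoted from \cite{KSZ} as a black box (the paper only uses its corollary for countable equivalence relations). So there is no in-paper proof to compare against, and your proposal has to stand on its own. Its overall architecture is the right one --- run the minimality dichotomy on the intermediate model generated by the classifying invariant of the generic point --- but it contains a genuine gap, and it is exactly the one you flag yourself. The source of the trouble is that you take the invariant to be the structure $f(\dot x_{\mathrm{gen}})$ itself. A structure is only an $E$-invariant up to isomorphism, so when you compare two mutually generic points $y_0,y_1$ you are forced to introduce an isomorphism $\phi\colon f(y_0)\to f(y_1)$, and you then need to know that $\phi$ carries no forcing-theoretic information, i.e.\ that $V[y_0][\phi]\neq V[y_0][y_1]$. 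There is no reason for the set of such isomorphisms to be ``combinatorially thin'': if $f(y_0)$ has a nontrivial automorphism group, the isomorphisms form a perfect set, and a priori one of them can code $y_1$ over $V[y_0]$. The standard way out (and the one the KSZ argument takes) is to replace $f(x)$ by a \emph{canonical} invariant: by the Scott analysis / Friedman--Stanley theory, an equivalence classifiable by countable structures admits an absolutely defined assignment $x\mapsto A_x$ of hereditarily countable sets with $xEy$ iff $A_x=A_y$ \emph{literally}. Then $y_0Ey_1$ gives $V[A_{y_0}]=V[A_{y_1}]$ outright, the Case 2 hypothesis (forced on the relevant condition, hence holding for both mutual generics) gives $V[A_{y_i}]=V[y_i]$, and $V[y_0]=V[y_1]$ contradicts mutual genericity with no isomorphism ever appearing. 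This requires minimality for intermediate models of the form $V[A]$ with $A\in HC$, not just $V[r]$ for reals $r$, which deserves a remark but is not a real obstacle.

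Two further steps are asserted rather than proved. The phrase ``an $I\otimes I$-positive Borel set of pairs of mutually generic points'' does not parse: mutual genericity is a property of points in a forcing extension, not of a ground-model Borel set; what nowhere ccc actually provides is that every positive condition splits into perfectly many pairwise disjoint positive conditions, which is what lets one manufacture (perfectly many) pairwise mutually generic points and, later, build a transversal. More seriously, the passage from ``any two mutually generic points of $C$ are $E$-inequivalent'' to ``there is a positive Borel $C^*\subseteq C$ on which $E$ is the identity'' is itself a substantial theorem in \cite{KSZ} (it is essentially the free set property implying total canonization, proved by a transfinite decomposition of the classes together with properness); ``a Borel uniformization on $C\times C$ minus a Fubini-small error'' is not an argument for it. Case 1 of your proposal, and the use of properness for Borel reading of names, are fine.
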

As Laver ideal fulfils these conditions we immediately get the following corollary.
\begin{cor}[\cite{KSZ}]
Let $T$ be a Laver tree, $E$ an equivalence classifiable by countable structures. Then there is a Laver subtree on which $E$ is either the identity relation or the full relation.
\end{cor}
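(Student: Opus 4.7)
The plan is to verify that the Laver $\sigma$-ideal $I$ meets the three hypotheses of the general theorem above and then translate the resulting canonization statement into the language of Laver subtrees. The quotient forcing $P_I$ is, up to a dense embedding, Laver forcing itself. Properness of Laver forcing is classical: Laver forcing satisfies Axiom A with the natural ranks measuring agreement of trees up to the $n$th splitting level above the stem, and Axiom A forcings are proper. Nowhere ccc-ness follows from the infinite branching of Laver trees: below any Laver tree one constructs uncountably (in fact continuum-many) pairwise incompatible direct extensions by selecting, at each splitting node, disjoint infinite subsets of immediate successors. Minimality of the Laver extension is a theorem of Gray.

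With these three properties verified, the theorem immediately yields that $I$ totally canonizes equivalences classifiable by countable structures: for every Borel set $B \in I^+$ there is a Borel $C \in I^+ \cap \mathrm{Borel}(B)$ on which $E \upharpoonright C$ is either the identity or the full relation. Applying this with $B = [T]$ produces such a Borel set $C \subseteq [T]$.

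It remains to upgrade $C$ to a Laver subtree. Since $C$ is Borel (hence analytic) and $C \notin I$, Proposition~\ref{laverfactmain} provides a Laver tree $S$ with $[S] \subseteq C$; as $C \subseteq [T]$, we automatically have $S \leq T$. The equivalence $E$ restricted to $[S]$ inherits whichever canonical form it had on $C$, so $E \upharpoonright [S]$ is either $\mathrm{id}([S])$ or $[S] \times [S]$, as required.

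There is no genuine obstacle in this argument; the mathematical substance is contained in the general canonization theorem and in Proposition~\ref{laverfactmain}. The present task is to collect the well-known forcing-theoretic properties of Laver forcing (properness, nowhere ccc, minimality of the extension) and to combine them with the dichotomy provided by Proposition~\ref{laverfactmain} in order to pass from an $I$-positive Borel set to an honest Laver subtree.
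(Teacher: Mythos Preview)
Your proposal is correct and follows exactly the route the paper takes: the paper simply asserts that the Laver ideal satisfies the hypotheses of the preceding theorem and declares the corollary immediate, while you spell out why properness, nowhere ccc, and minimality hold, and make explicit the passage from an $I$-positive Borel set to a Laver subtree via Proposition~\ref{laverfactmain}. No substantive difference in approach.
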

\begin{cor}[\cite{KSZ}]\label{ctble}
Let $T$ be a Laver tree, $E$ a countable equivalence relation (i.e. with countable classes). Then there is a Laver subtree on which $E$ is either the identity relation or the full relation.
\end{cor}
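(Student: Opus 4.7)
My plan is to reduce the problem via Theorem \ref{Sol} to a submeasure setting and then separate a clean dichotomy. Fix a Borel reduction $f\colon [T]\to 2^\omega$ of $E$ to $E_{\mathcal{I}}$ and a lower semicontinuous submeasure $\mu$ provided by Theorem \ref{Sol}, so that $\mathcal{I}=\mathrm{Fin}(\mu)$ and consequently $xEy\iff\mu(D(x,y))<\infty$ for $D(x,y)=\{k:f(x)(k)\neq f(y)(k)\}$. A preparatory step is to pass to a direct extension of $T$ on which $f$ becomes continuous, by the standard continuous reading of names for Laver forcing; then $D$ is continuous into $\mathcal{P}(\omega)$ and $\mu\circ D$ is lower semicontinuous on $[T]\times[T]$.

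The proof splits on whether there is some $x_0\in[T]$ together with a Laver subtree $S\leq T$ satisfying $\mu(D(x_0,y))<\infty$ for every $y\in[S]$. If yes (the \emph{full case}), then for any $y_1,y_2\in[S]$ the submeasure triangle inequality gives
\[
\mu(D(y_1,y_2))\leq\mu(D(y_1,x_0))+\mu(D(x_0,y_2))<\infty,
\]
so $E\upharpoonright[S]=[S]\times[S]$. Otherwise (the \emph{identity case}), for every $x\in[T]$ and every Laver $S\leq T$ there exists $y\in[S]$ with $\mu(D(x,y))=\infty$, and the task becomes to build a Laver subtree $S\leq T$ on which $\mu(D(x,y))=\infty$ for every pair of distinct branches, forcing $E\upharpoonright[S]=\mathrm{id}([S])$.

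The key technical ingredient for the identity case is the following extension lemma. Given incompatible nodes $u,v\in T$ and any $N\in\omega$, there exist extensions $u'\sqsupseteq u$, $v'\sqsupseteq v$ in $T$ and a finite $F\subseteq\omega$ with $\mu(F)\geq N$ such that $F\subseteq D(x,y)$ for every $x\in[T_{u'}]$ and $y\in[T_{v'}]$. Indeed, the identity hypothesis supplies some $x\in[T_u]$ and $y\in[T_v]$ with $\mu(D(x,y))=\infty$; lower semicontinuity of $\mu$ yields a finite $F\subseteq D(x,y)$ with $\mu(F)\geq N$; and continuity of $f$ lets us extend $u,v$ along $x,y$ far enough that the disagreement on $F$ is forced on all further extensions within $T_{u'}$ and $T_{v'}$.

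I would then execute a Laver fusion driven by this lemma. At stage $n$ one maintains a finite skeleton of committed splitting nodes together with, for each pair $(u,v)$ of current leaves, a witness $F^{(n)}_{u,v}$ with $\mu(F^{(n)}_{u,v})\geq n$ contained in $D(x,y)$ for all $x\in[T_u]$ and $y\in[T_v]$. Passing from stage $n$ to $n+1$, one processes the finitely many leaf pairs in turn, invoking the extension lemma to replace each pair by deeper nodes so that its witness strengthens to $\mu\geq n+1$; deepening a leaf only shrinks its subtree, so all witnesses already secured survive. One then enlarges the skeleton by adding the next batch of committed immediate successors as the fusion scheme demands. The limit $S$ of the fusion is a Laver subtree, and any two distinct branches of $[S]$ split at some finite stage and therefore inherit witnesses of arbitrarily large $\mu$-mass; this forces $\mu(D(x,y))=\infty$ and completes the identity case. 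The main obstacle I anticipate is the Laver-fusion bookkeeping --- simultaneously upgrading old witnesses while introducing new splitting commitments so that the limit remains genuinely Laver --- but this fits the standard Laver fusion template once the extension lemma above is in hand.
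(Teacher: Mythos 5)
Your proof starts from a hypothesis that the corollary does not make: you write ``Fix a Borel reduction $f\colon [T]\to 2^\omega$ of $E$ to $E_{\mathcal{I}}$,'' but the statement of Corollary~\ref{ctble} contains no $F_\sigma$ $P$-ideal and no assumption of reducibility to any $E_{\mathcal I}$ --- its only hypothesis is that $E$ is a countable Borel equivalence relation. It appears you transported the setup of Theorem~\ref{main} onto this earlier corollary. This gap cannot be patched, because it is simply false that every countable Borel equivalence relation is Borel reducible to $E_{\mathcal I}$ for some $F_\sigma$ $P$-ideal $\mathcal I$: by Theorem~\ref{Sol} such an $E_{\mathcal I}$ is the orbit equivalence relation of a Polishable \emph{abelian} subgroup of $2^\omega$, and there are countable Borel equivalence relations (e.g.\ the universal one $E_\infty$) that do not Borel reduce to any abelian group action. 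So the object $f$ you work with for the rest of the argument need not exist, and everything that follows --- the dichotomy, the extension lemma, the fusion --- rests on it.

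A smaller observation worth making even inside your (unjustified) framework: since $E$ is assumed to have countable classes, your ``full case'' can never occur, because $[S]$ for a Laver tree $S$ has size continuum and cannot sit inside a single countable $E$-class. That is a sign that the reducibility assumption is pulling you toward the wrong proof: the paper treats Corollary~\ref{ctble} simply as an instance of the preceding corollary on equivalences classifiable by countable structures (itself a specialization of the theorem about proper, nowhere ccc forcings adding a minimal extension), using the standard fact that every countable Borel equivalence relation is classifiable by countable structures. No submeasure, no reduction to $E_{\mathcal I}$, and no fusion are needed; the intended ``proof'' is a one-line citation.
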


J. Zapletal found the following $F_\sigma$ equivalence relation (with $K_\sigma$ classes) that is in the spectrum of Laver.
\begin{defin}\label{defofK}
For $x,y\in \omega^\omega$, we set $xKy$ if $\exists b \forall n \exists m_x,m_y\leq b (y(n+m_y)\geq x(n)\wedge x(n+m_x)\geq y(n))$.\\
\end{defin}
The following lemma gives us basic properties of $K$. The proof may be found in \cite{KSZ}, we provide here the proof of the last item as it is stated slightly differently in \cite{KSZ}. Notice the difference between $E_{\ell_p}$ for $p\in [1,\infty)$ and $E_{\ell_\infty}$ as the former can be canonized according to the main theorem.
\begin{lem}
\emph{}
\begin{enumerate}[(a)]
\item For any two Laver trees $T,S$ there are branches $x_1,x_2\in [T]$ and $y_1,y_2\in [S]$ such that $x_1Ky_1$ and $x_2\cancel{K}y_2$.
\item $K$ is in the spectrum of Laver.
\item $K$ is Borel bireducible with $E_{\ell_\infty}\subseteq \mathbb{R}^\omega\times\mathbb{R}^\omega$, where $xE_{\ell_\infty}y\equiv x-y\in \ell_\infty$.
\end{enumerate}
\end{lem}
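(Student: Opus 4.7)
My plan is to prove the bireducibility by exhibiting Borel reductions in both directions, $K\leq_B E_{\ell_\infty}$ and $E_{\ell_\infty}\leq_B K$. In each, the map must translate between the two dual descriptions of the equivalences: the ``bounded horizontal shift'' built into $K$ and the ``bounded vertical distance'' defining $E_{\ell_\infty}$.

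For $K\leq_B E_{\ell_\infty}$, my candidate sends $x\in\omega^\omega$ to its inverse running-maximum $f(x)(n)=\min\{k:\max_{j\leq k}x(j)\geq n\}$, defined on the comeager Borel set of unbounded $x$ (with bounded $x$ sent to a single fixed class). If $xKy$ with witness $b$, then the running maxima of $x,y$ dominate each other with a shift of at most $b$ positions, yielding $|f(x)(n)-f(y)(n)|\leq b$, so $f(x)\,E_{\ell_\infty}\,f(y)$. The converse implication is the delicate direction: naive $\ell_\infty$-closeness of $f(x),f(y)$ need not give $xKy$ when $x$'s running maximum has long plateaus, so one likely has to augment $f$ by a second coordinate that records, in some canonical way, the positional data of $x$ between records.

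For $E_{\ell_\infty}\leq_B K$, I would first replace $a\in\mathbb{R}^\omega$ by $a'(n)=n+\lfloor|a(n)|\rfloor$, an automatically unbounded integer sequence that preserves bounded pointwise differences, and then map $a'$ to its cumulative counting function $g(a)(n)=|\{i:a'(i)\leq n\}|$, which is finite at every $n$ by construction. A bounded pointwise difference $|a'-b'|\leq C$ translates into a shift of the counting function, giving $g(a)\,K\,g(b)$. The converse requires that the bounded shift of counting functions force the original sequences to remain pointwise close; since the counting function forgets how values sit at indices, one may need to pair $g$ with a coordinate that records the positions themselves.

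The main obstacle in each direction is precisely this converse implication. Naive ``distributional'' encodings can forget information about how values and positions interleave, so the central technical task is to choose encodings rich enough that the $\ell_\infty$-condition on the image genuinely forces the bounded-shift condition of $K$, and conversely. Particular care is needed with the ``$\geq n$'' clause in the definition of $K$, which demands a witness index at or beyond the reference position $n$ rather than merely bounded away from it; matching this one-sided constraint to a symmetric pointwise condition like $E_{\ell_\infty}$ is the combinatorial heart of the proof.
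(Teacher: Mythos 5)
Your proposal only addresses part (c); for (a) and (b) the paper simply cites \cite{KSZ}, so that omission is fine. For (c), however, the proposal is not a proof: in both directions the reduction you write down does not work as stated, and in both directions you yourself flag the converse implication as an open gap that would require an unspecified enrichment of the encoding.

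Concretely, for $E_{\ell_\infty}\leq_B K$ the first step $a\mapsto a'$, $a'(n)=n+\lfloor|a(n)|\rfloor$, is already not a reduction of $E_{\ell_\infty}$ to anything: taking $a(n)=n$ and $b(n)=-n$ gives $a'=b'$, hence $g(a)=g(b)$, while $a\cancel{E}_{\ell_\infty}b$. The sign information must be preserved; the paper handles this by first reducing $E_{\ell_\infty}$ on $\mathbb{R}^\omega$ to $E_{\ell_\infty}$ on $(\mathbb{R}^+)^\omega$ by splitting each coordinate into a positive part and a negative part (doubling the index set), and only then passing to $\omega^\omega$. Moreover the subsequent step, the cumulative counting function, is exactly the kind of ``distributional'' map you worry about: it forgets where the large values sit, so $K$-closeness of counting functions cannot recover $\ell_\infty$-closeness of the originals, and you do not supply the missing coordinate. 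The paper instead encodes $\lfloor x(i)\rfloor$ on a dedicated block $I_{\pi(i,j)}$ of length $j+1$ for every pair $(i,j)$, capping at $j$, which makes the shift-bound in $K$ literally read off $|x(i)-y(i)|$ and makes both implications go through with no loss of positional information.

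For $K\leq_B E_{\ell_\infty}$ your inverse running-maximum is only a partial map, and sending all bounded sequences ``to a single fixed class'' is incorrect: the constant-$0$ and constant-$1$ sequences are not $K$-equivalent (one needs $x(n+m)\geq y(n)$, i.e.\ $0\geq 1$), so bounded sequences do not form one $K$-class and cannot be collapsed. Even on unbounded $x$ you concede the converse implication is not established. The paper's reduction is of a different flavour and is total: fix an enumeration $(s_n)$ of $\omega^{<\omega}$ and set $f(x)(n)$ to be the least bound $b$ witnessing $xKy$ for some $y\supseteq s_n$; then $K$-equivalence with bound $b$ gives $|f(x)(n)-f(y)(n)|\leq b$ by a triangle-inequality argument, while $K$-inequivalence is witnessed at initial segments $s_n$ of $x$ where $f(x)(n)=0$ but $f(y)(n)$ is forced to be large. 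In short, both of your sketched encodings lose exactly the information that the equivalences must transport, and a genuinely different construction is needed; the acknowledged gaps are not peripheral technicalities but the whole content of the lemma.
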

\begin{proof}
\emph{}
\begin{enumerate}[(a)]
\item 
\item We refer to \cite{KSZ} for the proof of the first two items.
\item \begin{itemize}
\item $E_{\ell_\infty}\leq _B K$: We will prove $E_{\ell_\infty}\leq _B E_{\ell_\infty}\upharpoonright (\mathbb{R}^+)^\omega\leq _B K$. To prove the first inequality, consider $f:\mathbb{R}^\omega\rightarrow (\mathbb{R}^+)^\omega$ such that if $x(n)\geq 0$ then $f(x)(2n)=x(n)$ and $f(x)(2n+1)=0$ and if $x(n)<0$ then $f(x)(2n)=0$ and $f(x)(2n+1)=|x(n)|$.

For the second, let $\pi:\omega^2\rightarrow \omega$ be a bijection and $(I_{\pi(i,j)})_{i,j}$ a partition of $\omega$ into intervals such that $|I_{\pi(i,j)}|=j+1$ and $I_{\pi(i,j)}=\{p_0^{i,j},p_1^{i,j},\ldots,p_j^{i,j}\}$. We define $g:(\mathbb{R}^+)^\omega\rightarrow \omega^\omega$ as follows: Let $x\in (\mathbb{R}^+)^\omega$ be given, $g(x)(p_k^{i,j})=\min \{j,\lfloor x(i)\rfloor+k\}$ for $k<j$ and $g(x)(p_j^{i,j})=j$.

If $xE_{\ell_\infty}y$ and $\forall n (|x(n)-y(n)|\leq m)$, then $\forall k,ij\exists m_1,m_2\leq m (g(x)(p_k^{i,j})\leq g(y)(p_{k+m_1}^{i,j})\wedge g(y)(p_k^{i,j})\leq g(x)(p_{k+m_2}^{i,j}))$, thus $g(x)Kg(y)$. Just observe that $g(x)(p_k^{i,j})\leq j$ and either $j-k\leq m$ and we have $g(y)(p_j^{i,j})=j$, or since $x(i)-y(i)=m_1\leq m$ we have $g(y)(p_{k+m_1}^{i,j})=y(i)+k+m_1=x(i)+k=g(x)(p_k^{i,j})$.

Suppose $x\cancel{E}_{\ell_\infty}y$, let $m$ be arbitrary and let $n$ be such that $|x(n)-y(n)|>m$, let us assume that $y(n)-x(n)>m$. Then $\forall b\leq m (g(x)(p_{k+b}^{n,m})<g(y)(p_k^{n,m}))$. Since $m$ was arbitrary we have $g(x)\cancel{K}g(y)$.
\item $K\leq _B E_{\ell_\infty}$: Let $(s_n)_n$ be an enumeration of $\omega^{<\omega}$. We define $f: \omega^\omega\rightarrow \mathbb{R}^\omega$ as follows: $f(x)(n)=\min\{b: \exists y\supseteq s_n (xKy\wedge b\text{ is the bound from the definition that works})\}$. One can easily check that $f$ is Borel. Let $xKy$ such that a bound $b$ works for this pair and let $n$ be arbitrary. Let $z\supseteq s_n$ be arbitrary such that $xKz$ and $b_1$ works for the pair and $yKz$ and $b_2$ works for the pair. Then one can check that $|b_1-b_2|\leq b$ so $f(x)E_{\ell_\infty}f(y)$.

Suppose that $x\cancel{K}y$ and let $m$ be arbitrary. Then there exists $n$ such that $x(n)>y(n+k)$ for $k<m$ (or vice versa). Let $s_i=x\upharpoonright (n+1)$, then $f(x)(n)=0$, however $f(y)(n)\geq m$, thus $f(x)\cancel{E}_{\ell_\infty}f(y)$.

\end{itemize}

\end{enumerate}
\end{proof}
\section{Main result}
We can now start proving the main theorem, we provide its statement here again for the convenience.
\begin{thm}\label{main}
Let $T$ be a Laver tree, $\mathcal{I}$ an $F_\sigma$ $P$-ideal on $\omega$ and $E\subseteq [T]\times [T]$ be an equivalence relation Borel reducible to $E_\mathcal{I}$. Then there is a Laver subtree $S\leq T$ such that $E\upharpoonright [S]$ is either $\mathrm{id}([S])$ or $[S]\times [S]$.
\end{thm}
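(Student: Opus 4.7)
The plan is to combine Solecki's representation of $\mathcal{I}$ by a lower semicontinuous submeasure with a fusion argument driven by Corollary~\ref{laverfact}. Apply Theorem~\ref{Sol} to fix an lsc submeasure $\mu$ with $\mathcal{I}=\mathrm{Exh}(\mu)=\mathrm{Fin}(\mu)$, and let $f\colon[T]\to 2^\omega$ be a Borel reduction witnessing $E\leq_B E_\mathcal{I}$, so that $xEy$ iff $\mu(\{n:f(x)(n)\neq f(y)(n)\})<\infty$. A standard Laver fusion, applying Corollary~\ref{laverfact} level by level to the preimages $f^{-1}(U)$ of basic clopen sets $U\subseteq 2^\omega$, produces a direct extension $T_0\leq_0 T$ on which $f$ is continuous with a uniform modulus: there is a monotone assignment $\phi\colon T_0\to 2^{<\omega}$ such that $f(x)$ extends $\phi(s)$ for every $x\in[T_{0,s}]$, with $|\phi(s)|\to\infty$ as one goes up the tree.

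Now I dichotomise. If some further direct extension $T_1\leq_0 T_0$ satisfies $E\upharpoonright[T_1]=[T_1]\times[T_1]$, we are done. Otherwise, every direct extension $T'\leq_0 T_0$ contains $x,y$ with $\mu(\{n:f(x)(n)\neq f(y)(n)\})=\infty$, and the goal becomes to build a (not necessarily direct) Laver subtree $S\leq T_0$ on which $E$ is the identity. I build $S$ level by level by fusion: having placed a node $s\in S$ with integer threshold $k_s$, choose infinitely many immediate successors $t\in T_0$ of $s$, with strictly larger thresholds $k_t$, such that for any two chosen successors $t\neq t'$ and any $x\in[T_{0,t}]$, $y\in[T_{0,t'}]$, the $\phi$-data already forces the accumulated $\mu$-mass of $\{n:f(x)(n)\neq f(y)(n)\}$ to exceed $k_s$ once one descends far enough. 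Diagonalising $k_s$ through all of $\omega$ guarantees that any two distinct branches of $S$ have difference set of infinite $\mu$-mass, so $E\upharpoonright[S]=\mathrm{id}([S])$.

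The selection step in this fusion is the technical core of the proof and the only place where the hypotheses on $\mathcal{I}$ are used essentially. Suppose it fails at some node $s$: then cofinitely many immediate successors $t$ of $s$ in $T_0$ are ``submeasure-close'' to cofinitely many others, producing a countable family of pairs of branches $x_n,y_n\in[T_{0,s}]$ with $\mu(A_n)$ uniformly bounded, where $A_n=\{k:f(x_n)(k)\neq f(y_n)(k)\}$. The $P$-ideal property of $\mathcal{I}$ merges $\{A_n\}$ into a single $A\in\mathcal{I}$ with $A_n\subseteq_* A$ for all $n$; combined with the closed nature of ``bounded $\mu$-mass'' provided by lower semicontinuity, a further application of Corollary~\ref{laverfact} applied to the $F_\sigma$ set of branches whose $f$-image differs from a fixed reference sequence only inside $A$ modulo finite yields a direct extension of $T_{0,s}$ on which every pair of branches is $E_\mathcal{I}$-equivalent, contradicting the no-collapse hypothesis.

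Thus both the $F_\sigma$ and the $P$ assumptions on $\mathcal{I}$ appear where they are needed: the $F_\sigma$ part, via the lsc submeasure, turns $\mathcal{I}$-smallness into a closed, bounded-mass condition amenable to Corollary~\ref{laverfact}, while the $P$ part supplies the uniform absorbing set that converts countably many individual bounds into a single one. These two ingredients are what distinguish the ideals covered by the theorem from, e.g., the ideal giving rise to $E_{\ell_\infty}$, which sits non-trivially in the spectrum of Laver.
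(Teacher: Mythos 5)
Your proposal shares the paper's opening moves — Solecki's lsc submeasure $\mu$, and a fusion producing a direct extension on which $f$ is Lipschitz via a monotone, length-preserving map into $2^{<\omega}$ — but the core of your argument after the dichotomy does not work, and the paper's proof takes a genuinely different route.

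The central flaw is in your claimed construction of the "identity" subtree $S$. At each node $s\in S$ with threshold $k_s$, you arrange that branches $x,y$ descending through distinct chosen successors $t\ne t'$ of $s$ satisfy $\mu(\{n:f(x)(n)\ne f(y)(n)\})>k_s$. But $k_s$ is a fixed finite number attached to the split node $s$, and for two branches splitting at $s$ that is all you get: a lower bound $>k_s$, not infinity. "Diagonalising $k_s$ through $\omega$" makes the bound grow with the \emph{depth of the split}, but for a given pair $x\ne y$ the split node $s$ (and hence $k_s$) is fixed. So the construction does not rule out $\mu(f(x)\triangle f(y))\in(k_s,\infty)$, in which case $xEy$ and $E\upharpoonright[S]$ is not the identity. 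You would need to ensure that the initial-segment masses $\mu\bigl(p(x\upharpoonright m)\triangle p(y\upharpoonright m)\bigr)$ tend to infinity along the branch, and your fusion as described controls them only at the level of the split, not cofinally. Worse, this stronger control is in general impossible to impose directly: at a node where all immediate successors carry branches whose images are $E_{\mathcal I}$-close (the "convergent" situation in the paper), the difference sets of nearby branches have uniformly \emph{finite} $\mu$-mass, and no amount of picking successors gives unbounded mass at that split.

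The paper avoids this obstruction by a different decomposition. It marks each node as "convergent" or "divergent" according to whether the auxiliary points $x_{s_n}$ are $E_{\mathcal I}$-close to $x_s$, and splits into (i) a subtree of entirely convergent nodes and (ii) the case where divergent nodes form a barrier below every node. In case (ii) it proves a lemma giving only mass $>1$ between distinct branches on a direct extension — not infinite mass — and uses that to conclude that $E$ is a \emph{countable} equivalence relation on the resulting tree; then Corollary~\ref{ctble} (canonization of countable Borel equivalences for the Laver ideal) finishes. In case (i) it builds a fusion tree with tightly controlled interval masses $C_{u_i}$ and only at the very end applies Corollary~\ref{laverfact} to the set of branches on which the accumulated mass is finite, getting either the full relation or the identity. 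The reduction to countable equivalences is the missing ingredient in your argument: it lets the paper get by with a bounded-away-from-zero separation rather than infinite separation.

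Your "failure of the selection step" paragraph is also too vague to carry the weight you put on it. It is not clear what "cofinitely many immediate successors are submeasure-close to cofinitely many others" means precisely, how this produces a countable family of pairs with uniformly bounded $\mu$-mass, nor how the $P$-ideal union $A$ converts to $E_{\mathcal I}$-equivalence of \emph{all} pairs of branches of some direct extension rather than just the sampled pairs $x_n,y_n$. That step needs an argument of the kind the paper's convergent/divergent bookkeeping supplies.
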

\begin{proof}
Let $f:[T]\rightarrow 2^\omega$ be the Borel reduction and let $\mu$ be the lower semicontinuous submeasure for $\mathcal{I}$ guaranteed by Theorem \ref{Sol}. The submeasure $\mu$ induces a pseudometrics (which may attain infinite value though) which we denote $d$, i.e. $d(x,y)=\mu(x\bigtriangleup y)$ for $x,y\in 2^\omega$. Moreover, we define $d_n^k(x,y)$ as $\mu(x\upharpoonright (n,k)\bigtriangleup y\upharpoonright (n,k))$. When $n$ or $k$ is omitted it means that $n=0$, resp. $k=\infty$.

We need to refine $T$ to obtain a Laver tree with some special properties. This will be done in a series of claims. To simplify the notation, after applying each one of these claims we will still denote the tree as $T$.
\begin{claim}
There exist a direct extension $T'\leq_0 T$ and a function $p:T'\rightarrow 2^{<\omega}$ which is monotone and preserves length of sequences, i.e. if $s\subseteq t$, then $p(s)\subseteq p(t)$, and $|s|=|p(s)|$, such that $\forall x\in [T'] (f(x)=\bigcup_n p(x\upharpoonright n))$.

In other words, $f$ on $[T']$ is Lipschitz. 
\end{claim}
\emph{Proof of the Claim}. We will find a direct extension of $T$ and $p$ defined on it from the statement of the claim. For simplicity we assume the stem of $T$ is the empty sequence.

Consider the following sets $$A_i=\{x\in [T]: f(x)(0)=i\}$$ for $i\in \{0,1\}$. They are Borel and according to Corollary \ref{laverfact} one of them contains a direct extension $S$ of $T$. We replace $T$ by $S$, set $p(\emptyset)=i$ and fix the first level above the stem. Then for any immediate successor $s$ of the stem we again consider sets $A^1_i=\{x\in [S_s]: f(x)(1)=i\}$. One of them contains direct extension and we continue similarly. The final tree is obtained by fusion.$\qed$\\

\begin{observation}\label{observ}
Let $s\in T$ be a node above (or equal to) the stem of $T$. Then for every $n$ there is a direct extension $T^n_s\leq_0 T_s$ such that $\forall x,y\in [T^n_s] \forall m\leq n (p(x\upharpoonright m)=p(y\upharpoonright m))$. We will call such a tree homogeneous up to level $n$.

We may also suppose that we have $T^n_s\subseteq T^m_s$ for $n\geq m$. Define then $x_s\in 2^\omega$ such that $x_s(n)=p(x\upharpoonright n+1)(n)$ for $x\in [T^m_s]$, where $m\geq n+1$. This definition does not depend on $m\geq n+1$ and $x\in [T^m_s]$.
\end{observation}
The following can be done by a basic fusion argument.
\begin{fact}
There exists a direct extension $T'\leq_0 T$ such that $\forall s\in T'$ above the stem if $S\leq_0 T'_s$ is homogeneous up to some level $n$ then $\forall x\in [S] (p(x\upharpoonright n)=x_s\upharpoonright n)$.

Moreover, if $s\in S$ and $\{s_0,s_1,\ldots\}$ is a set of its immediate successors then $\forall n\exists m_0\forall m\geq m_0 (x_s\upharpoonright n=x_{s_m}\upharpoonright n)$. In other words, $\lim_{n\to \infty} x_{s_n}\to x_s$.
\end{fact}
Let $s\in T$ be any node above (or equal to) the stem of $T$. Let $\{s_0,s_1,\ldots\}$ be the set of its immediate successors. We reduce this set so that precisely one of the following two possibilities happens: $\forall n (x_{s_n}E_\mathcal{I}x_s)$ or $\forall n (x_{s_n}\cancel{E}_\mathcal{I}x_s)$.
\begin{defin}
If the former case holds then we mark $s$ as "convergent", if the latter then we mark it as "divergent".

Moreover, for every $s\in S$ strictly above the stem we define $\varepsilon_s$ as follows: if the immediate predecessor $t$ of $s$ is marked as convergent, then we set $\varepsilon_s=d(x_t,x_s)$; otherwise, we set $\varepsilon_s=\infty$.
\end{defin}
\vspace{4mm}
{\bf Splitting into cases}\\
We split into two complementary cases (i.e. one holds if and only if the other does not).
\begin{itemize}
\item {\bf Case 1} There exists $S\leq T$ such that every $s\in S$ above the stem is marked as convergent.\\
\item {\bf Case 2} For every $s\in T$ above the stem there is a barrier $B\subseteq T_s$ of elements above $s$ that were marked as divergent.\\
\end{itemize}\vspace{3mm}
\emph{Proof of canonization assuming} Case 1. We will do a fusion. Let us denote the stem of $S$  as $s$. We will inductively build $U_n,S_n,m_n$ for every $n$ such that $S_n\leq_0 S_{n-1}$, $U_n\subseteq S_m$, for every $n\leq m$, is an $n+1$-element subtree $\{u_0,\ldots,u_n\}$ of $S$ and $m_n\in \omega$. At the end we will get a direct extension $U=\bigcup _n U_n=\bigcap _n S_n$ together with pairwise disjoint sets $C_{u_1},C_{u_2},\ldots$, where $C_{u_i}\subseteq (m_{i-1},m_i)$, such that $\forall x\in [U]$ $(f(x)\bigtriangleup x_s)\cap (m_{i-1},m_i)=C_{u_i}$ if $u_i\subseteq x$ and $\mu(\bigcup_ {\{i>0:u_i\nsubseteq x\}} f(x)\cap (m_{i_1},m_i))<1$. The following conditions will be satisfied during the $n$-th step of the fusion.
\begin{itemize}
\item For every $0<i\leq n$ and any branch $x\in [S_n]$ going through $u_i$ $|d_{m_{i-1}}^{m_i}(f(x),x_s)-\varepsilon_{u_i}|<1/2^i$; more precisely there will be some finite set $C_{u_i}\subseteq (m_{i-1},m_i)$ always defined as $(x_{u_i}\bigtriangleup x_s)\cap (m_{i-1},m_i)$ such that for any branch $x\in [S_n]$ going through $u_i$ we will have $(f(x)\bigtriangleup x_s)\cap (m_{i-1},m_i)=C_{u_i}$ and $|\mu(C_{u_i})-\varepsilon_{u_i}|<1/2^i$. And for every branch $y\in [U_n]$ not going through $u_i$ but going through some other $u_j$,  $d_{m_{i-1}}^{m_i}(f(y),x_s)<1/2^i$; thus it will follow from the triangle inequality that $|d_{m_{i-1}}^{m_i}(f(x),f(y))-\varepsilon_{u_i}|<1/2^{i-1}$; resp. $\mu((f(x)\bigtriangleup f(y)\bigtriangleup C_{u_i})\cap (m_{i-1},m_i))<1/2^i$.
\item For every $i\leq n$ $d_{m_n}(x_{u_i},x_s)<1/2^{n+2}$.
\end{itemize}
 
Suppose at first that such $U$ has been already constructed. Let us consider the set $$A=\{x\in [U]: \mu (\bigcup_{i=|s|+1}^\infty C_{x\upharpoonright i})<\infty\}$$ It is Borel and by Corollary \ref{laverfact} either there is a Laver subtree $V\leq_0 U$ such that $[V]\subseteq A$ or there is a Laver subtree $V\leq_0 U$ such that $[V]\cap A=\emptyset$. In the former case, $V$ is a Laver subtree such that $\forall x,y\in [V] (xEy)$; while in the latter case, $V$ is a Laver subtree such that $\forall x,y\in [V] (x\cancel{E}y)$. This follows immediately from the condition above. Let $x,y\in [V]$ be two different branches splitting on the $n$-th level. Then $\max\{\mu(\bigcup_{i=n}^\infty C_{x\upharpoonright i}),\mu(\bigcup_{i=n}^\infty C_{y\upharpoonright i})\}-\sum _{j=n-|s|+1}^\infty 1/2^j\leq d(f(x),f(y))\leq \mu(\bigcup_{i=n}^\infty C_{x\upharpoonright i})+\mu(\bigcup_{i=n}^\infty C_{y\upharpoonright i})+\sum _{j=n-|s|+1}^\infty 1/2^{j-1}$.\\

Let $s$ be the stem of $S$. Set $S_0=S$, $U_0=\{s\}$, $m_0=|s|$. Before treating the general step let us describe the case $n=1$. We pick some immediate successor of the stem $s$, denote it as $u_1$ and we set $U_1=\{s=u_0,u_1\}$. Since $d(x_{u_1},x_s)=\varepsilon_{u_1}$ there is some $m>m_0$ such that $d^m(x_{u_1},x_s)>\varepsilon_{u_1}-1/2$. There is some $m_1\geq m$ such that $d_{m_1}(x_{u_1},x_s)<1/2^3$. Then there exist direct extensions $E_1\leq_0 S_{0 u_1}$ and $E_0\leq_0 S_0$ such that for all branches $x\in [E_1]$ we have $f(x)(m)=x_{u_1}(m)$ for $m\leq m_1$, and for all branches $y\in [E_0]$ we have $f(y)(m)=x_s(m)$ for $m\leq m_1$. We set $S_1=E_0\cup E_1$, i.e. we replace $S_{0 u_1}$ in $S_0$ by its direct extension $E_1$ and we replace $S_0\setminus S_{0 u_1}$ by its direct extension $E_0$. The required conditions are satisfied and we proceed to a general step.

Now let us suppose that we have already found $S_{n-1}$,$U_{n-1}=\{s=u_0,u_1,\ldots,u_{n-1}\}$ and $m_0=|s|,m_1,\ldots,m_{n-1}$. Choose some next node $u_n\in S_{n-1}$ for the fusion such that it is an immediate successor of some $u_i$ $i<n$ and $x_{u_i}\upharpoonright m_{n-1}=x_{u_n}\upharpoonright m_{n-1}$ (recall Observation \ref{observ}). Set $U_n=U_{n-1}\cup \{u_n\}$. There is some $m$ such that $d_{m_{n-1}}^m(x_{u_n},x_{u_i})>\varepsilon_{u_m}-1/2^{n+1}$. Since we have from the inductive assumption that $d_{m_{n-1}}^m(x_{u_i},x_s)<1/2^{n+1}$, we get from the triangle inequality $d_{m_{n-1}}^m(x_{u_n},x_s)>\varepsilon_{u_m}-1/2^n$. Let $m_n$ be the $\max\{m,\max\{k_i:i\leq n\}\}$, where $k_i$ is any number such that $d_{k_i}(x_{u_i},x_s)<1/2^{n+2}$. Note that such $k_i$ exist because $x_{u_i}E_\mathcal{I}x_s$. We then find direct extensions $E_i\leq_0 S_{n-1,u_i}$ such that for every branch $x\in [E_i]$ we have $f(x)(m)=x_{u_i}(m)$ for $m\leq m_n$. We refine them so that they are mutually disjoint, i.e. $E_i\cap E_j=\emptyset$ for $i\neq j$ and we set $S_n=\bigcup _{i\leq n} E_i$. The induction step is done, all required conditions are satisfied. That finishes the proof of this case.
\vspace{3mm}\\
\emph{Proof of canonization assuming} Case 2. We will assume that we have a Laver tree $S\leq T$ such that for every $s\in S$ above the stem if $s$ is marked as convergent then there is a barrier $B\subseteq S_s$ of elements above $s$ that are marked as divergent (if we assume that Case 1 does not hold then we may take $S=T$).

The following lemma will be the main tool.
\begin{lem}
For any Laver subtree $P\leq S$ there is its direct extension $Q\leq_0 P$ such that for any two branches $x,y\in [Q]$ splitting from the stem of $Q$ we have $d(f(x),f(y))>1$.
\end{lem}
Once the lemma is proved the rest will be rather easy. We will do a fusion in which we will be fixing levels. We will construct direct extensions of $S$ $S=V_0\geq _0 V_1\geq _0 V_2\geq _0\ldots$ such that for $i<j$ the $i$-th level of $V_i$ is equal to the $i$-th level of $V_j$ in such a way that the resulting tree $S\geq _0 V=\bigcap _i V_i$ will have the property that for any two different branches $x,y\in [V]$ we will have $d(f(x),f(y))>1$.

This is not hard to do. We start with stem $s$ of $S=V_0$. We find a direct extension $V_1\leq _0 V_0$ guaranteed by the lemma. We fix the first level $\{s_0,s_1,\ldots\}$ (the set of all immediate successors of $s$) above the stem. Then for every immediate successor $s_i\in V_1$ of $s$ we apply the lemma with $V_{1 s_i}$ as $P$ and obtain a direct extension $Q_i$. We set $V_2=\bigcup_i Q_i\leq_0 V_1$, fix the second level above the stem and continue similarly.

Then we are done by the following claim and Corollary \ref{ctble}.
\begin{claim}
$E$ on $[V]$ is countable.
\end{claim}
\begin{proof}
Suppose for contradiction that there is some $x\in [V]$ that has uncountably many equivalent branches $(y_\alpha)_{\alpha<\omega_1}\subseteq [V]$. For every $\alpha$ there is $n$ such that $d_n(f(x),f(y_\alpha))<1/2$. Since the set of all $y_\alpha$'s is uncountable we may assume that one single $n$ works for them all. But let $y_{\alpha_0},y_{\alpha_1}$ be two of such branches that split above the $n$-th coordinate. It follows that from our construction that $d_n(f(y_{\alpha_0}),f(y_{\alpha_1}))>1$, so for one of them, let us say $y_{\alpha_0}$, must hold that $d(f(x),f(y_{\alpha_0}))>1/2$, a contradiction.
\end{proof}
So what remains is to prove the lemma.\\\emph{}\\
\emph{Proof of the lemma}. Let $P\leq S$ be given. Denote $s$ its stem. There are two cases.
\begin{itemize}
\item $s$ is marked as divergent: Pick its immediate successor $s_0$. Since $s$ is marked as divergent, there is $n_0$ such that $d^{n_0}(x_{s_0},x_s)>1$ and there are direct extensions $Q_0\leq_0 P_{s_0}$, $P_0\leq_0 P\setminus P_{s_0}$ such that $\forall x\in [Q_0]\forall y\in [P_0] \forall m\leq n_0(f(x)(m)=x_{s_0}(m)\wedge f(y)(m)=x_s(m))$.

We then pick next immediate successor $s_1\in P_0$ of $s$. There is again some $n_1$ such that $d^{n_1}(x_{s_1},x_s)>1$ and we find direct extensions $Q_1\leq_0 P_{0 s_1}$, $P_1\leq_0 P_0\setminus P_{0 s_1}$ such that $\forall x\in [Q_1]\forall y\in [P_1] \forall m\leq n_1(f(x)(m)=x_{s_0}(m)\wedge f(y)(m)=x_s(m))$.

We continue similarly until we pick infinitely many immediate successors of $s$ and find corresponding direct extensions $Q_i$. Then we set $Q=\bigcup _i Q_i$. It is easy to check it has the required properties.
\item $s$ is marked as convergent: There is a barrier $B\subseteq P$ of elements that were marked as divergent. We may assume that for every $b\in B$ and every $s\leq t<b$, $t$ is marked as convergent. We will do a similar fusion to that in the proof of canonization assuming Case 1. We will inductively build $Q_n, P_n, m_n$ such that $P_n\leq_0 P_{n-1}$, $Q_n=(\{q_0=s,\ldots,q_n\}\cup R)\subseteq P_m$ for $n\leq m$, $m_n\in \omega$. Let $\{q_i:i\in C\}\subseteq \{q_0,\ldots,q_n\}$ be the (possibly empty) set of those elements that are immediate successors of some element from $B$. Then $R=\bigcup _{i\in C} P_{i,q_i}$. The final tree is again obtained as $Q=\bigcup _i Q_i=\bigcap_i P_i$. Conditions that must be satisfied during the $n$-th step of the fusion are the following.
\begin{itemize}
\item For every $i<n$ if $i\notin C$, i.e. $q_i$ is not an immediate successor of an element from $B$, then for any branch $x\in [P_n]$ going through $q_i$ we have $d_{m_{n-1}}^{m_n}(f(x),x_s)<1/2^n$. And if $n\in C$, i.e. $q_n$ is an immediate successor of an element from $B$, then for any branch $y\in [P_n]$ going through $q_n$ we have $d_{m_{n-1}}^{m_n}(f(y),x_s)>2$; thus it will follow from the triangle inequality that $d_{m_{n-1}}^{m_n}(f(x),f(y))>1$.
\item For every $i\leq n$ if $i\notin C$, i.e. $q_i$ is not an immediate successor of an element from $B$, then $d_{m_n}(x_{q_i},x_s)<1/2^{n+2}$.
\end{itemize}
Suppose at first that such $Q$ has been constructed. We need to prove that for any two branches $x,y\in [Q]$ with $s$ as the last common node we have $d(f(x),f(y))>1$. It follows from the assumption that $x$ goes through some $u_i$ which is an immediate successor of some element from $B$, similarly $y$ goes through some different $u_j$ with the same property. Assume $i<j$. Then we get from the inductive assumption that $d_{m_{i-1}}^{m_i}(f(x),f(y))>1$ and we are done.\\

In the first step of the induction we set $Q_0=\{s\}$, $P_0=P$ and $m_0=|s|$; the set $R$ is empty.

Suppose we have already found $Q_{n-1},P_{n-1},m_{n-1}$. We choose some $q_n$ that is an immediate successor of some $q_i$. We have two cases.
\begin{itemize}
\item $q_i\notin B$, i.e. $q_n$ is not an immediate successor of an element from $B$. Then we set $m_n=\max\{k_i:i\leq n,i\notin C\}$, where $k_i$, for $i\notin C$, is any number such that $d_{k_i}(x_{q_i},x_s)<1/2^{n+2}$. Note that such $k_i$ exist because $x_{u_i}E_\mathcal{I}x_s$. We then find direct extensions $E_i\leq_0 P_{n-1,q_i}$ for $i\leq n,i\notin C$ such that for every branch $x\in [E_i]$ we have $f(x)(m)=x_{q_i}(m)$ for $m\leq m_n$. We refine them so that they are mutually disjoint, i.e. $E_i\cap E_j=\emptyset$ for $i\neq j$ and we set $P_n=(\bigcup _{i\notin C} E_i)\cup R$.
\item $q_i\in B$, i.e. $q_n$ is an immediate successor of an element from $B$. We add $n$ to $C$. There is some $m$ such that $d_{m_{n-1}}^m(x_{q_n},x_{q_i})>2+1/2^{n+1}$ since $q_i\in B$ is marked as divergent. Since from the inductive assumption we have $d_{m_{n-1}}^m(x_{q_i},x_s)<1/2^{n+1}$ we get from the triangle inequality that $d_{m_{n-1}}^m(x_{q_n},x_s)>2$. We then set $m_n=\max\{m,\max\{k_i:i\leq n,i\notin C\}\}$, where $k_i$'s are defined exactly the same as in the first case. We then again find direct extensions $E_i\leq_0 P_{n-1,q_i}$ for $i=n$ and $i<n,i\notin C$ such that for every branch $x\in [E_i]$ we have $f(x)(m)=x_{q_i}(m)$ for $m\leq m_n$. We refine them so that they are mutually disjoint, i.e. $E_i\cap E_j=\emptyset$ for $i\neq j$. We add $E_n$ to $R$ and we set $P_n=(\bigcup _{i\notin C} E_i)\cup R$.

\end{itemize}
In both cases it is easy to check that all required conditions are satisifed.
\end{itemize}
\end{proof}
\section{Corollaries}
\begin{thm}
Let $E\subseteq \omega^\omega\times \omega^\omega$ be an equivalence relation containing $K$, i.e. $E\supseteq K$, which is Borel reducible to $E_\mathcal{I}$ for some $F_\sigma$ $P$-ideal. Then there exists a Laver large set contained in one equivalence class.
\end{thm}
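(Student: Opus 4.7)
The plan is to combine Theorem \ref{main} with the non-trivial part of item (a) of the lemma on $K$. Take any Laver tree $T$, for instance $T=\omega^{<\omega}$. Since $E$ is Borel reducible to $E_\mathcal{I}$ for an $F_\sigma$ $P$-ideal, Theorem \ref{main} produces a Laver subtree $S\leq T$ on which $E\upharpoonright [S]$ is either $\mathrm{id}([S])$ or $[S]\times [S]$. The second alternative is exactly what the corollary claims: $[S]$ is Laver-positive (it contains all branches of the Laver tree $S$, hence is not in $I$ by Proposition \ref{laverfactmain}) and is entirely contained in one $E$-class.

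So the whole task reduces to ruling out the identity alternative, and this is where the assumption $K\subseteq E$ enters. Let $s$ be the stem of $S$ and pick two distinct immediate successors $s_1, s_2$ of $s$ in $S$. The induced subtrees $S_1:=S_{s_1}$ and $S_2:=S_{s_2}$ are Laver trees whose branch sets are disjoint, since every branch through $s_i$ has value $s_i(|s|)$ at coordinate $|s|$. Applying item (a) of the $K$-lemma to the pair $(S_1,S_2)$ yields branches $x\in [S_1]$ and $y\in [S_2]$ with $xKy$; disjointness forces $x\neq y$, and the hypothesis $K\subseteq E$ then gives $xEy$. Thus $E\upharpoonright [S]$ strictly extends $\mathrm{id}([S])$, and by the dichotomy of Theorem \ref{main} it must equal $[S]\times [S]$, so $[S]$ is the desired Laver-large set.

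I do not foresee a real obstacle: the entire argument is a short combination of Theorem \ref{main} (used as a black box) with the existence of a pair of $K$-related branches across two disjoint Laver subtrees, which is part (a) of the lemma on $K$. The only small point worth being explicit about is using two disjoint subtrees, rather than $S$ twice, so that the two branches produced by (a) are automatically distinct — otherwise a single $K$-related pair could collapse to a diagonal point and fail to witness non-identity.
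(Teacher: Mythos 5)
Your argument correctly produces a Laver tree $S$ with $E\upharpoonright [S]=[S]\times [S]$, and the care you take with two disjoint subtrees to obtain genuinely distinct $K$-related branches is a sound (if slightly more explicit) version of the same step the paper uses to rule out the identity alternative. However, the last sentence mislabels what you have: $[S]$ is a Laver-\emph{positive} set, i.e.\ it is not in the Laver ideal $I$, but the theorem asserts the existence of a Laver-\emph{large} set, meaning a set whose \emph{complement} lies in $I$. The set of branches of a single Laver tree has a Laver-positive complement, so it is not Laver large, and the argument as written stops short of the claimed conclusion.

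The missing step is to pass from $[S]$ to the full equivalence class and show that its complement is in $I$. Concretely: fix any $x_0\in [S]$ and set $X=[x_0]_E$, a Borel set containing $[S]$. If $X$ were not Laver large, its Borel complement would be Laver positive, so by Proposition \ref{laverfactmain} it would contain all branches of some Laver tree $R$. But then item (a) of the $K$-lemma applied to the pair $(S,R)$ yields branches $b\in [S]$ and $c\in [R]$ with $bKc$, hence $bEc$ since $K\subseteq E$; this contradicts $b\in X$ and $c\notin X$. This is precisely the second use of item (a) that the paper's proof makes (after first using it, as you did, to rule out the identity case), and it is essential: without it one only gets a Laver-positive set in a single class, which is a strictly weaker statement than the theorem.
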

Recall that $K$ was defined in Definition \ref{defofK}.
\begin{proof}
Consider the set $$X=\{x\in \omega^\omega: [x]_E \text{ contains all branches of some Laver tree}\}$$ We use Theorem \ref{main} to prove that $X$ is non-empty. Suppose it is empty, then by Theorem \ref{main} there exists a Laver tree $T$ such that $E\upharpoonright [T]=\mathrm{id}([T])$. However, there must be two branches $x,y\in [T]$ such that $xKy$ and since $K\subseteq E$, also $xEy$, a contradiction.

Thus $X$ is non-empty. We show that it is also $E$-equivalent, i.e. there is no pair $x,y\in X$ such that $x\cancel{E}y$. Suppose the contrary. Then $[x]_E$ contains all branches of some Laver tree $T_x$ and $[y]_E$ contains all branches of Laver tree $T_y$ and there are branches $b_x\in T_x$ and $b_y\in T_y$ such that $b_xKb_y$ and since $K\subseteq E$, also $b_xEb_y$, a contradiction.

So $X$ is a single equivalence class, containing all branches of some Laver tree $T$, and thus it is Borel. If it were not Laver large, then the complement would be a Borel Laver positive set, so by Proposition \ref{laverfactmain} it would contain all branches of some Laver tree $S$. But we would again have that there are a branch $x\in [T]\subseteq X$ and a branch $y\in [S]$ such that $xKy$, thus $xEy$, a contradiction. 
\end{proof}
\begin{thm}[Silver dichotomy - under "$\forall x\in \mathbb{R}(\omega_1^{L[x]}<\omega_1)$"]
Let $E\subseteq \omega^\omega\times \omega^\omega$ be an equivalence relation Borel reducible to $E_\mathcal{I}$ for $F_\sigma$ $P$-ideal $\mathcal{I}$. Then either $\omega^\omega=(\bigcup _{n\in \omega} E_n)\cup J$, where $E_n$ for every $n$ is an equivalence class of $E$ and $J$ is a set in the Laver ideal, or there exists a Laver tree $T$ such that $E\upharpoonright [T]=\mathrm{id}([T])$.
\end{thm}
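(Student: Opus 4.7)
\emph{Proof plan.} Let $e$ be a Borel code for $E$ and $I$ the Laver ideal. The approach is a dichotomy split driven by Theorem~\ref{main}. First, ask whether there is a Laver tree $T$ with $E\upharpoonright [T]=\mathrm{id}([T])$. If such $T$ exists, the second alternative of the conclusion holds and we are done. Otherwise, by Theorem~\ref{main} every Laver tree $T$ must admit a Laver subtree $S\leq T$ with $E\upharpoonright [S]=[S]\times [S]$, so every Borel Laver-positive set contains a Laver tree whose branches lie in a single $E$-class. This is the standing assumption in the rest of the proof.

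Next, consider $Y=\{x\in \omega^\omega:[x]_E\in I\}$, an $E$-invariant Borel set, and show $Y\in I$. If not, Proposition~\ref{laverfactmain} provides a Laver tree $T_0$ with $[T_0]\subseteq Y$; applying Theorem~\ref{main} to $T_0$, the identity case being excluded by the standing assumption, yields a Laver subtree $S\leq T_0$ with $[S]$ lying in a single $E$-class $C\subseteq Y$. Then $C$ is simultaneously Laver-positive (it contains $[S]$) and in $I$, a contradiction. Hence $Y\in I$, and $X:=\omega^\omega\setminus Y$ is covered, modulo $I$, by Laver-large $E$-classes.

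The remaining task is to cover $X$ (modulo an $I$-small residue) by a single countable family of $E$-classes; this is where the hypothesis $\forall x\in \mathbb{R}(\omega_1^{L[x]}<\omega_1)$ enters. For each $x\in X$, the $\Sigma^1_2(x,e)$-statement ``$\exists T$ a Laver tree with $x\in [T]\subseteq [x]_E$'' is true in $V$, hence by Shoenfield absoluteness already in $L[x,e]$; under the hypothesis $L[x,e]$ contains only countably many reals, in particular only countably many Laver trees. Emulating the proof of the Silver dichotomy from~\cite{KSZ}, one builds by transfinite recursion a sequence of Laver trees $T_\alpha$ and associated $E$-classes $C_\alpha$. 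At stage $\alpha$ let $z_\alpha$ code the previously chosen trees $(T_\beta)_{\beta<\alpha}$. If the remainder $R_\alpha:=X\setminus \bigcup_{\beta<\alpha}C_\beta$ is still Laver-positive, then by $\Sigma^1_2$-absoluteness so it is in $L[e,z_\alpha]$, and we pick $T_\alpha\in L[e,z_\alpha]$ with $[T_\alpha]\subseteq R_\alpha$ lying in a single class $C_\alpha$ (Theorem~\ref{main} being constructive in its parameters). The hypothesis $\omega_1^{L[e,z_\alpha]}<\omega_1$ forces the recursion to halt at some countable stage $\gamma$, at which point $R_\gamma\in I$; then $\{C_\beta\}_{\beta<\gamma}$ together with the $I$-small set $Y\cup R_\gamma$ realises the first alternative of the dichotomy.

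The principal obstacle is this concluding reflection argument: one must organise the recursion so that at each stage the newly chosen Laver tree and $E$-class are coded inside a parametrised model $L[e,z_\alpha]$, with $z_\alpha$ a single real capturing all previous choices, so that the hypothesis can be invoked to cap the recursion at a countable ordinal. The earlier steps (the dichotomy split and the proof that $Y\in I$) are straightforward applications of Theorem~\ref{main} and Proposition~\ref{laverfactmain}; the work is concentrated in making the reflection step precise.
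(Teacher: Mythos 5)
The paper does not give its own proof of this theorem: it simply refers the reader to the Silver dichotomy machinery in \cite{KSZ}, combined with Theorem~\ref{main}. Your proposal is a genuine attempt to reconstruct that argument, and the outermost skeleton — split on whether any Laver tree witnesses identity; if not, every Laver tree contains a subtree in a single class; then use absoluteness plus the hypothesis $\forall x(\omega_1^{L[x]}<\omega_1)$ to get away with countably many classes — is the right shape. But two steps as written do not go through.

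First, you declare $Y=\{x:[x]_E\in I\}$ to be Borel and apply Proposition~\ref{laverfactmain} to it. It is not Borel. Since $E$ is Borel, $[x]_E$ is Borel, and $[x]_E\notin I$ unwinds (via Proposition~\ref{laverfactmain}) to $\exists T\,(T\text{ is a Laver tree}\wedge [T]\subseteq[x]_E)$, which is $\Sigma^1_2(x,e)$; hence $Y$ is $\Pi^1_2$. Proposition~\ref{laverfactmain} covers analytic sets only, and the hypothesis $\forall x(\omega_1^{L[x]}<\omega_1)$ upgrades the Laver dichotomy to $\Sigma^1_2$ sets, but not obviously to $\Pi^1_2$ sets, so your appeal to it for $Y$ is unjustified. (This particular step may be unnecessary: the theorem does not demand that the classes $E_n$ be Laver-positive, only that countably many of them cover $\omega^\omega$ modulo $I$, so one can avoid asserting $Y\in I$ outright and fold $Y$ into the residue.)

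Second, and you flag this yourself, the termination of the transfinite recursion is asserted, not proved. The parameter $z_\alpha$ you use to pin down $T_\alpha$ inside $L[e,z_\alpha]$ changes with $\alpha$, so the hypothesis gives a countable bound on the reals of \emph{each} $L[e,z_\alpha]$ individually but no uniform bound on the length of the recursion: there is no single real $z$ such that every $T_\alpha$ lies in $L[e,z]$, and nothing prevents the recursion a priori from running for $\omega_1$ many steps. The actual argument in~\cite{KSZ} does not run a naive recursion of this kind; it uses properness of Laver forcing together with a countable elementary submodel $M\prec H(\theta)$, and shows that the (countably many) $E$-classes meeting $M$ already cover $\omega^\omega$ modulo $I$, via a $\Sigma^1_2$-absoluteness argument between $M$ (or a collapse of it) and $V$. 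Your "reflection step" is exactly where the content lies, and as stated it is a promissory note rather than a proof. So the proposal matches the intended route in spirit, but it has a complexity error in the $Y\in I$ step and leaves the crucial reflection argument unproved.
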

This is just a combination of Theorem \ref{main} and the results from the section on Silver dichotomy from \cite{KSZ}. It is not known if the assumption "$\forall x\in \mathbb{R}(\omega_1^{L[x]}<\omega_1)$" is necessary. 
\begin{cor}[under the same assumption]
Let $E\subseteq \omega^\omega\times \omega^\omega$ be an equivalence relation Borel reducible to $E_\mathcal{I}$ for $F_\sigma$ $P$-ideal $\mathcal{I}$ and let $X\subseteq \omega^\omega$ be an arbitrary Laver-positive subset (not necessarily definable) such that $\forall x,y\in X (x\cancel{E}y)$. Then there exists a Laver tree $T$ such that $E\upharpoonright [T]=\mathrm{id}([T])$.
\end{cor}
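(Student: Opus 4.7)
The plan is to reduce the corollary directly to the Silver dichotomy of the previous theorem. Apply that dichotomy to $E$: either there is already a Laver tree $T$ on which $E\upharpoonright [T]=\mathrm{id}([T])$, in which case we are immediately done, or $\omega^\omega=(\bigcup_{n\in\omega} E_n)\cup J$ for some countable family of $E$-classes $E_n$ and some $J$ in the Laver ideal. The strategy is to rule out the second alternative using the hypothesis that $X$ is a Laver-positive $E$-transversal.

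In the second alternative, the assumption $\forall x,y\in X(x\cancel{E}y)$ forces $|X\cap E_n|\leq 1$ for every $n$, so $X\subseteq (\bigcup_{n\in\omega}(X\cap E_n))\cup J$, i.e.\ $X$ is contained in the union of a countable set of points with $J$. A single point $\{x\}\subseteq\omega^\omega$ belongs to the Laver ideal: taking $g(s)=x(|s|)$ gives $\{x\}\subseteq A_g$, since every initial segment of $x$ satisfies $x(n)=g(x\upharpoonright n)$. As the Laver ideal is a $\sigma$-ideal, a countable union of singletons together with $J$ is still in the Laver ideal, contradicting the assumed Laver-positivity of $X$. Hence the first alternative of the dichotomy must hold, producing the desired Laver tree.

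I do not foresee a genuine obstacle here; the corollary is essentially a bookkeeping consequence of the Silver dichotomy plus the trivial observation that points are Laver-null. The only thing to be careful about is that $X$ is not assumed to be definable, so we cannot apply any canonization result to $X$ directly; all of the definability work is done inside the dichotomy theorem, which classifies the whole space $\omega^\omega$, and the undefinable set $X$ is only used to select which side of the dichotomy we must be on.
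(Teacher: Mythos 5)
Your proof is correct and follows essentially the same route as the paper: apply the Silver dichotomy, then rule out the countable-classes-plus-null alternative by observing that the $E$-transversality of $X$ forces $X\setminus J$ to be countable, hence Laver null, contradicting the Laver-positivity of $X$. The only difference is that you spell out explicitly why singletons are Laver null, which the paper leaves implicit.
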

\begin{proof}
Just use the Silver dichotomy from the previous theorem and notice that the first possibility cannot happen. If $\omega^\omega=(\bigcup _{n\in \omega} E_n)\cup J$ as in the statement of the previous theorem, then $X\setminus J$ is still not in the Laver ideal and is uncountable.
\end{proof}

\end{document}